\numberwithin{equation}{section}
\newcommand\mr[1]{\mathrm{#1}}
\newtheorem{definition}{Definition}[section]
\newtheorem{theorem}[definition]{Theorem}
\begin{document}
\title{A sequence of radially symmetric weak solutions for some nonlocal elliptic problem in $\mathbb{R}^N$}
\author[a]{M. Makvand Chaharlang\thanks{moloudmakvand@gmail.com}}
\author[b]{M. A. Ragusa\thanks{maragusa@dmi.unict.it}}
\author[a]{A. Razani\thanks{razani@sci.ikiu.ac.ir}}
\affil[a]{Department of Pure Mathematics, Faculty of Science, Imam Khomeini International University, Postal code: 34149-16818, Qazvin, Iran.}
\affil[b]{Department of Mathematics, University of Catania, Viale
Andrea Doria No. 6, Catania 95128, Italy, and RUDN University, Miklukho-Maklaya Str. 6, Moscow, 117198, Russia.}
\date{\empty}
\maketitle
\begin{abstract}
In this article a nonlocal elliptic problem involving $p$-Laplacian on unbounded domain is considered.
Using variational methods and under suitable conditions, the existence of a sequence of radially symmetric weak solutions (in two different cases) is proved.
\begin{description}
\item[Keywords:] Sequence of solutions, nonlocal elliptic problem, $p$-Laplacian, variational methods.
\end{description}
$2010$ Mathematics subject classification: 35J20, 34B10, 35J50.
\end{abstract}
\section{Introduction}
The theory of partial differential equations is a one of the powerful theories in mathematics.  On one side, this theory is used
 to model a wide variety
of physically significant problems arising in every different areas such as physics, engineering and other
applied disciplines (see \cite{Ehsani,hesaraki01,hesaraki02,Lindgren,Mokhtar,Mokhtarp,pournaki,pournakir,RaggusaDMJ, razani021, razani04,razani041,razani,razani19,razani02,razanigo,ScapllatoBVP}), and on the other side PDE's can be considered as an
instrument in the development of other branches of mathematics.

Many problems of mathematical physics and variational calculus are often very complicated. In all of them it is not
sufficient to deal with the classical solutions of differential equations. In fact, in the twentieth century, however, it was observed
that the space $C^k$ was not exactly the right space to
study solutions of differential equations. Although, solutions in $C^k$ are nicer but it seems people are happy with obtaining weak
solutions in some Sobolev space which are the modern replacement for these spaces that only satisfy the weak formulation. Therefore,
Sobolev spaces play an important role in the theory of partial differential equations. Since then, the study of the weak solution in other spaces such as
Orlicz-Morrey space and  $\dot{B}_{\infty,\infty}^{-1}$ space is a research problem (see \cite{GalaApplicable, GalaCMA, GalaAIMS}).

Laplace equation is the prototype for linear elliptic equations, as the most important partial differential equation of the second order.
This equation has a non-linear counterpart, the so-called $p$-Laplace
equation (see \cite{BehboudiFilomat, KhalkhaliInfinitely, Khalkhali2013, MahdaviFilomat,MakvandTJM,MakvandFilomat,MakvandGMJ}).
There has been a surge of interest in the $p$-Laplacian in many
different contexts from game theory to mechanics and image processing. Recently, a great attention has been focused on the study of
nonlocal operators of elliptic type (for example \cite{MakvandTJM,MakvandCKMS}), both, for research in pure Mathematics and for concrete
real world applications. From a physical point of view, the nonlocal operators play a crucial role in describing
several phenomena such as, the thin obstacle problem, optimization, material science,  geophysical fluid dynamics
and mathematical finance, phase transitions, water waves.

In this article we consider the following problem
\begin{equation}\label{eq1.1}
\begin{cases}
-M\left(\int_\mathbb{R^N} |\nabla u(x)|^p\,dx + \int_\mathbb{R^N}|u(x)|^pdx\right)(\Delta_p u+|u|^{p-2}u)=\lambda f(x,u) & \text{in $\mathbb{R}^N$},\\
 u\in W^{1,p}(\mathbb{R}^N)
\end{cases}
\end{equation}
where $p> N > 1$, $(\mathbb{R}^N,|.|)$ is the usual Euclidean space and $M:[0,+\infty[ \rightarrow \mathbb{R} $ is a continuous function and there exist two positive
constants $m_0$ and $m_1$ such that $m_0 \leq M(t) \leq m_1$ for all $t \geq 0$. We assume that
$f:\mathbb{R}^N \times \mathbb{R}\to \mathbb{R}$ is an $L^1$-Carath\'eodory function and $\lambda >0 $ is a real parameter.
Also $\Delta_pu:=\mr{div}(|\nabla u|^{p-2}\nabla u)$ denotes the $p$-Laplacian operator.
This equation is related to the stationary version of the Kirchhoff equation
\begin{equation}\label{eq23}
\rho \dfrac{\partial ^2 u}{\partial t^2} - (\dfrac{P_0}{h}+\dfrac{E}{2L}\int_0^L |
\dfrac{\partial u}{\partial x}|^2 \mr{d}x )\dfrac{\partial ^2 u}{\partial x^2}=0,
\end{equation}
proposed by Kirchhoff in 1883. This equation is an extension of the classical
d'Alembert's wave equation by considering the effects of the length changes of the string produced by
transverse vibrations. Since the first equation in \eqref{eq1.1} contains an integral over $\mathbb{R}^N $,
it is no longer a pointwise identity, and therefore it is often called a nonlocal problem. This problem models
several physical and biological systems, where $u$ describes a process which depends on the average of itself,
such as the population identity, (for example see \cite{chipot}). The parameters in \eqref{eq23} have the following meanings:
\emph{$h$} is the cross-section area, \emph{$E$} is the Young modulus, \emph{$\rho $} is the mass density, \emph{$L$}
is the length of the string, and \emph{$P_0$} is the initial tension. In recent years, $p$-Kirchhoff type problems have
been studied by many researchers, (for example see \cite{MakvandTJM,MakvandCKMS}). Beside elliptic problems with boundary conditions on
bounded domain of $\mathbb{R}^N$ which have extensive applications in different parts of scient and have been considered
by many authors recently, see \cite{MakvandGMJ,MakvandFilomat}, some elliptic problems arise on unbounded domain $\mathbb{R}^N$, see
\cite{Chen,Yang}. It is worth mentioning that one of the difficulties in studying problems on
unbounded domains is that there is no compact embedding for $W^{1,p}(\mathbb{R}^N)$. Although, we can use the continuous embedding
$W^{1,p}(\mathbb{R^N})\hookrightarrow L^{\infty}(\mathbb{R^N})$ from Morrey's theorem, it is far from being compact. Considering,
 a crucial embedding result due to Krist\'{a}ly and principally
based on a Strauss-type estimation which shows $W_r^{1,p}(\mathbb{R}^N)\hookrightarrow L^{\infty}(\mathbb{R}^N)$ is compact
whenever $2\leq N<p<+\infty$, where $W_r^{1,p}(\mathbb{R}^N)$ is a subspace of radially symmetric functions of $W^{1,p}(\mathbb{R}^N)$,
we prove the existence of a sequence of radially symmetric weak solutions for \eqref{eq1.1} which is a
nonlocal elliptic problem in the unbounded domain $\mathbb{R}^N$.

\section{Preliminaries}
In this part we remind some definitions and theorems as follows:
\begin{definition}\label{defn1}
The function $f:\mathbb{R}^N \times \mathbb{R}\to \mathbb{R}$ is said to be a\\
$L^1$-Carath\'eodory function, if
\begin{description}
\item[($A_1$)] the function $x\mapsto f(x,t)$ is measurable for every $t\in \mathbb{R}$,
\item[($A_2$)] the function $t\mapsto f(x,t)$ is continuous for a.e. $x\in \mathbb{R}^N$,
\item[($A_3$)] for every $h>0$ there exists a function $\ell_h\in L^1(\mathbb{R}^N)$ such that\\ $\sup_{|t|\leq h}|f(x,t)|\leq \ell_h(x)$,
for a.e. $x\in \mathbb{R}^N$.
\end{description}
\end{definition}

\begin{definition}
For fixed $\lambda$, a function $u:{\mathbb{R}^N}\rightarrow\mathbb{R}$ is said to be a weak solution of \eqref{eq1.1},
if $u \in W^{1,p}\left(\mathbb{R}^N\right)$ and for every $v\in W^{1,p}\left(\mathbb{R}^N\right)$
\[
\begin{array}{r}
M(\|u\|_{W^{1,p}\left(\mathbb{R}^N\right)})(\int_{\mathbb{R}^N}|\nabla
u(x)|^{p-2}\nabla u(x).\nabla v(x)dx +\int_{\mathbb{R}^N}|u(x)|^{p-2}u(x)v(x)dx)\\
-\lambda\int_{\mathbb{R}^N}f(x,u(x))v(x)dx=0,
\end{array}
\]
where $\|u\|_{W^{1,p}\left(\mathbb{R}^N\right)}:=\left(\int_{\mathbb{R}^N} |\nabla u(x)|^p dx + \int_\mathbb{R^N}|u(x)|^pdx\right)^{1/p}$.
\end{definition}

Later on, w define a functional energy where its the critical points are exactly the weak solutions of \eqref{eq1.1}.

Morrey's theorem, implies the continuous embedding
\begin{equation}\label{embedding}
W^{1,p}(\mathbb{R^N})\hookrightarrow L^{\infty}(\mathbb{R^N}),
\end{equation}
which says that there exists $c:=\frac{2p}{p-N}$, such that
$\|u\|_{\infty}\leq c\|u\|_{W^{1,p}(\mathbb{R}^N)}$, for every $u \in W^{1,p}(\mathbb{R^N})$, where $\|u\|_\infty:=\underset{x\in \mathbb{R}^N}{esssup}|u(x)|$, for every $u\in L^\infty(\mathbb{R}^N)$. Since in the low-dimensional case, every function $u\in W^{1,p}(\mathbb{R}^N)$
admits a continuous representation (see \cite[p.166]{Brezis}. In the sequel we will replace $u$ by this element.

We need the following notations (see \cite{candito} for more details):
\begin{itemize}
\item[(I)] $O(N)$ stands for the orthogonal group of $\mathbb{R}^N$.
\item[(II)] $B(0,s)$ denotes the open $N$-dimensional ball of center zero, radius $s>0$ and standard Lebesgue measure, $meas(B(0,s))$.
%\item[(III)] $\|\alpha\|_{B(0,\frac{s}{2})}:=\int_{B(0,\frac{s}{2})}\alpha(x) dx$.
\end{itemize}

\begin{definition}\
\begin{itemize}
\item A function $h : \mathbb{R}^N\to \mathbb{R}$ is radially symmetric if $h(gx) = h(x)$, for every
$g\in O(N)$ and $x\in \mathbb{R}^N$.
\item Let $G$ be a topological group. A continuous map $\xi:G\times X\to X:(g,x)\to \xi(g,u):=gu$,
is called the action of $G$ on the Banach space $(X,\|.\|_X)$ if
\[
1u=u,\qquad (gm)u=g(mu),\qquad u\mapsto gu\ \text{is linear}.
\]
\item  The action is said to be isometric if $\|gu\|_X=\|u\|_X$, for every $g\in G$.
\item  The space of $G$-invariant points is defined by
\[
Fix(G):=\{ u\in X: gu=u, \text{for all}\ {g\in G}\}.
\]
\item A map $m:X\to \mathbb{R}$ is said to be $G$-invariant if $mog=m$ for every $g\in G$.
\end{itemize}
\end{definition}
The following theorem is important to study the critical point of the functional.
\begin{theorem}\label{palais}
(Palais (1979)) Assume that the action of the topological group $G$ on the Banach space $X$ is isometric.
If $J\in C^1(X:\mathbb{R})$ is $G$-invariant
and if $u$ is a critical point of $J$ restricted to $Fix(G)$, then $u$ is a critical point of $J$.
\end{theorem}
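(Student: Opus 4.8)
The plan is to prove directly that $J'(u)=0$ as an element of $X^{*}$, combining the hypothesis that $u$ is critical for $J$ restricted to the closed linear subspace $\mathrm{Fix}(G)$ with a symmetrization (averaging) of an arbitrary test direction. The first step is to record the infinitesimal form of $G$-invariance: since $J(gx)=J(x)$ for all $g\in G$ and the map $x\mapsto gx$ is linear, differentiating $t\mapsto J(g(x+th))=J(gx+t\,gh)=J(x+th)$ at $t=0$ yields $\langle J'(gx),gh\rangle=\langle J'(x),h\rangle$ for all $x,h\in X$ and $g\in G$. Evaluating at $x=u\in\mathrm{Fix}(G)$, where $gu=u$, gives the key relation
\[
\langle J'(u),gh\rangle=\langle J'(u),h\rangle\qquad\text{for every }h\in X,\ g\in G,
\]
i.e. the functional $J'(u)$ is invariant under the (dual) $G$-action.

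Next I would introduce the averaging operator. Assuming $G$ is compact — which is the situation in the application, where $G$ is $O(N)$ (or a closed subgroup of it) — let $\mu$ be the normalized Haar measure on $G$ and set $Ah:=\int_{G}(gh)\,d\mu(g)$, understood as a Bochner integral in $X$; it is well defined because the orbit map $g\mapsto gh$ is continuous and $G$ is compact, and because the action is isometric the integrand has norm $\|h\|_{X}$, so $A$ is a bounded linear operator with $\|A\|\le 1$. Two properties are needed. First, $Ah\in\mathrm{Fix}(G)$ for every $h\in X$: for $g_{0}\in G$, left-invariance of Haar measure gives $g_{0}(Ah)=\int_{G}(g_{0}gh)\,d\mu(g)=\int_{G}(gh)\,d\mu(g)=Ah$. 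Second, since $A$ is bounded, the duality pairing commutes with the integral: $\langle\varphi,Ah\rangle=\int_{G}\langle\varphi,gh\rangle\,d\mu(g)$ for every $\varphi\in X^{*}$.

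Now I would assemble the pieces. For an arbitrary $h\in X$, the second property together with the key relation above gives
\[
\langle J'(u),Ah\rangle=\int_{G}\langle J'(u),gh\rangle\,d\mu(g)=\int_{G}\langle J'(u),h\rangle\,d\mu(g)=\langle J'(u),h\rangle,
\]
using $\mu(G)=1$. On the other hand $Ah\in\mathrm{Fix}(G)$; since $\mathrm{Fix}(G)$ is a closed linear subspace and $u$ is a critical point of $J|_{\mathrm{Fix}(G)}$, whose derivative at $u$ is exactly the restriction of $J'(u)$ to $\mathrm{Fix}(G)$, we get $\langle J'(u),Ah\rangle=0$. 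Comparing the two computations yields $\langle J'(u),h\rangle=0$ for every $h\in X$, hence $J'(u)=0$ and $u$ is a critical point of $J$.

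The main obstacle — essentially the only place beyond routine bookkeeping — is the construction and justification of the averaging operator $A$: one must guarantee that the $X$-valued integral exists (continuity of the orbit map plus compactness of $G$), that it produces elements of $\mathrm{Fix}(G)$ (invariance of the Haar measure), and that one may legitimately interchange the integral with the duality pairing (boundedness of $A$). In the Hilbert-space case $p=2$ one can avoid Haar averaging entirely: $\mathrm{Fix}(G)^{\perp}$ is $G$-invariant by isometry, the orthogonal projection onto $\mathrm{Fix}(G)$ is $G$-equivariant, and the key relation forces $\nabla J(u)\in\mathrm{Fix}(G)$, so that $\|\nabla J(u)\|^{2}=\langle\nabla J(u),\nabla J(u)\rangle=0$ immediately; for general $p$, however, the averaging argument above is the natural route.
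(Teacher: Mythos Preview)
The paper does not prove this theorem; it is stated as a classical result attributed to Palais (1979) and used as a black box in the two main theorems, so there is no proof in the paper to compare against.

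That said, your argument is correct for compact $G$, and since the only application in the paper is $G=O(N)$, this is exactly what is needed. The Haar-averaging construction is the standard route in the compact case: the key points you identify---existence of the Bochner integral via continuity of the orbit map, $Ah\in\mathrm{Fix}(G)$ from left-invariance of Haar measure, and passing the duality pairing through the integral---are all in order, and the chain $\langle J'(u),h\rangle=\langle J'(u),Ah\rangle=0$ is clean. One small remark: you should also note why the derivative of $J|_{\mathrm{Fix}(G)}$ at $u$ equals the restriction of $J'(u)$; this holds because $\mathrm{Fix}(G)$ is a closed linear subspace, so the inclusion is linear and the chain rule is trivial, but it is worth a sentence. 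Your closing comment that the full Palais principle for general (noncompact) topological groups requires more is accurate---Palais' original proof uses a different mechanism (essentially a fixed-point/closest-point argument exploiting the isometric action rather than averaging)---but for the purposes of this paper your compact-group version is entirely sufficient.
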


The action of the group $O(N)$ on $W^{1,p}(\mathbb{R}^N)$ can be defined by $(gu)(x):=u(g^{-1}x)$, for every
$g\in W^{1,p}(\mathbb{R}^N)$ and $x\in \mathbb{R^N}$.
A computation shows that this group acts linearly and isometrically, which means $\|u\|=\|gu\|$, for every
$g\in O(N)$ and $u\in W^{1,p}(\mathbb{R}^N)$.

\begin{definition}
The subspace of radially symmetric functions of $W^{1,p}(\mathbb{R^N})$ is defined by
\[
X:=W_r^{1,p}(\mathbb{R}^N):=\{ u\in W^{1,p}(\mathbb{R}^N): gu=u,\ \text{for all}\ g\in O(N)\},
\]
and endowed by the norm $\|u\|_r:=\left(\int_\mathbb{R^N} |\nabla u(x)|^p dx + \int_\mathbb{R^N}|u(x)|^pdx\right)^{1/p}$.
\end{definition}
The following crucial embedding result due to Krist\'{a}ly and principally based on a Strauss-type estimation (see \cite{Strauss})
(Also see \cite[Theorem 3.1]{Kristaly}, \cite{Varga} and \cite{Willem} for related subjects).
\begin{theorem}\label{thcom}
The embedding $W^{1,p}_r(\mathbb{R}^N)\hookrightarrow L^{\infty}(\mathbb{R}^N)$, is compact whenever $2\leq N<p<+\infty$.
\end{theorem}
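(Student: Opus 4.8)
The plan is to derive a Strauss-type pointwise decay estimate for radial functions and then combine it with the local compactness that Morrey's theorem provides on balls; the continuity of the embedding is already recorded in \eqref{embedding}, so only compactness is at issue.

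First I would prove that every $u\in W^{1,p}_r(\mathbb{R}^N)$ satisfies a decay estimate of the form
\[
|u(x)|\le C_{N,p}\,|x|^{-(N-1)/p}\,\|u\|_r\qquad\text{for a.e. }x\in\mathbb{R}^N\setminus\{0\}.
\]
By density of the smooth radial functions in $W^{1,p}_r(\mathbb{R}^N)$ it suffices to argue for radial $u\in C_c^\infty(\mathbb{R}^N)$, written as $u(x)=w(|x|)$. Starting from the identity $|w(r)|^p=-p\int_r^{+\infty}|w(s)|^{p-2}w(s)\,w'(s)\,ds$, one uses $s^{N-1}\ge r^{N-1}$ on the range of integration to pull out the factor $r^{-(N-1)}$, and then applies H\"older's inequality with exponents $p/(p-1)$ and $p$ to the weighted integral $\int_r^{+\infty}|w(s)|^{p-1}|w'(s)|\,s^{N-1}\,ds$. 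Since $|\nabla u|=|w'(|x|)|$ a.e. for a radial function, the right-hand side is controlled by $\|u\|_{L^p(\mathbb{R}^N)}^{p-1}\,\|\nabla u\|_{L^p(\mathbb{R}^N)}\le\|u\|_r^p$ up to a constant depending only on $N$ and $p$; passing to the limit yields the estimate for arbitrary $u\in W^{1,p}_r(\mathbb{R}^N)$. I expect this to be the heart of the argument: the only delicate points are the density of smooth radial functions and the identification of the radial gradient.

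Second, I would take a bounded sequence $(u_n)$ in $W^{1,p}_r(\mathbb{R}^N)$. Being a closed subspace of the reflexive space $W^{1,p}(\mathbb{R}^N)$ (recall $1<N<p<+\infty$), $W^{1,p}_r(\mathbb{R}^N)$ is itself reflexive and weakly closed, so after passing to a subsequence $u_n\rightharpoonup u$ weakly in $W^{1,p}_r(\mathbb{R}^N)$ with $u$ radial. For each $R\in\mathbb{N}$ the restriction to $B(0,R)$ is weakly continuous, and since $p>N$ the Morrey embedding $W^{1,p}(B(0,R))\hookrightarrow C^{0,1-N/p}(\overline{B(0,R)})$ together with the Arzel\`a--Ascoli theorem makes $W^{1,p}(B(0,R))\hookrightarrow C(\overline{B(0,R)})$ compact; hence a further subsequence converges uniformly on $\overline{B(0,R)}$, necessarily to $u$. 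A diagonal procedure over $R=1,2,\dots$ produces a single subsequence, still denoted $(u_n)$, with $u_n\to u$ uniformly on every ball $B(0,R)$.

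Finally I would glue the two ingredients together. Fix $\varepsilon>0$ and set $K:=\sup_n\|u_n\|_r<+\infty$. Applying the decay estimate to $u_n$ and to $u$ gives $\sup_{|x|\ge R}|u_n(x)-u(x)|\le 2C_{N,p}K\,R^{-(N-1)/p}$, which is $<\varepsilon/2$ once $R$ is large, uniformly in $n$; this is precisely where the hypothesis $N\ge 2$ is used, so that the exponent $(N-1)/p$ is strictly positive. Fixing such an $R$ and then invoking the uniform convergence on $\overline{B(0,R)}$ to absorb the region $|x|\le R$ for $n$ large, one obtains $\|u_n-u\|_{L^\infty(\mathbb{R}^N)}<\varepsilon$ for all large $n$. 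Thus every bounded sequence in $W^{1,p}_r(\mathbb{R}^N)$ admits a subsequence converging in $L^\infty(\mathbb{R}^N)$, which is the asserted compactness of the embedding.
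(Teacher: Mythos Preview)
The paper does not give its own proof of this theorem: it simply records the result and attributes it to Krist\'aly, noting that it rests on a Strauss-type estimate, with references to \cite{Kristaly}, \cite{Strauss}, \cite{Varga}, \cite{Willem}. Your proposal therefore supplies more than the paper does, and it does so along exactly the line the paper indicates: you derive the pointwise radial decay $|u(x)|\le C_{N,p}|x|^{-(N-1)/p}\|u\|_r$ \`a la Strauss, combine it with local compactness of $W^{1,p}(B(0,R))\hookrightarrow C(\overline{B(0,R)})$ for $p>N$ via Morrey and Arzel\`a--Ascoli, and glue the two regimes by a diagonal argument. The argument is correct, and you identify accurately where each hypothesis enters (the condition $N\ge 2$ is needed precisely so that $(N-1)/p>0$ and the tail decays). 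So your proof is consistent with, but strictly more detailed than, what the paper provides.
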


Here we consider the following functionals:
\begin{itemize}
\item $F(x,\xi):=\int_{0}^{\xi}f(x,t)\mr{d}t,\quad $ for every
$(x,\xi)\in\mathbb{R^N}\times\mathbb{R}$,
\item $\widehat{M}(t):= \int_0^t M(s) \mr{d}s ,\quad$ for every $t>0$,
\item $\Phi(u):=\frac{1}{p}\widehat{M}(\|u\|_r^p) $ for every $u \in X$,
\item $\Psi(u):=\int_\mathbb{R^N}F(x,u(x))\mr{d}x$, for every $u \in X$,
\item $I_\lambda(u):=\Phi(u)-\lambda \Psi(u)$ for every $u \in X$.
\end{itemize}
By standard arguments \cite{candito}, we can show that $\Phi$ is G\^ateaux differentiable, coercive and
sequentially weakly lower semicontinuous whose derivative at the point $u \in X$ is the functional $\Phi'(u)\in X^*$ given by
\[
\Phi'(u)(v)=M(\|u\|^p_r)(\int_\mathbb{R^N}|\nabla u|^{p-2}\nabla u.\nabla v dx + \int_\mathbb{R^N}|u|^{p-2}uv dx),
\]
for every $v\in X$. Also standard arguments show that the functional $\Psi$ is well defined,
sequentially weakly upper semicontinuous and G\^ateaux differentiable whose G\^ateaux derivative
at the point $u \in X$ and for every $v\in X$ is given by,
\[
\Psi'(u)(v)=\int_\mathbb{R^N}f(x,u(x)) dx.
\]
In the last part of this section we recall the
following theorem \cite[Theorem 2.1]{bonanno}.
\begin{theorem}\label{theo2.1}
Let $X$ be a reflexive real Banach space, let $\Phi,\Psi:X\to \mathbb{R}$ be two G\^ateaux differentiable
functionals such that $\Phi$ is sequentially weakly lower semicontinuous, strongly continuous and coercive,
and $\Psi$ is sequentially weakly upper semicontinuous. For every $r>\inf_X\Phi$, put
\[
\begin{aligned}
\varphi(r)&:=\inf_{\Phi(u)<r}\frac{\sup_{\Phi(v)<r}\Psi(v)-\Psi(u)}{r-\Phi(u)},\\
\gamma&:=\liminf_{r\to+\infty}\varphi(r),\quad \text{and}\quad \delta:=\liminf_{r\to(\inf_X\Phi)^+}\varphi(r).
\end{aligned}
\]
Then the following properties hold:
\begin{itemize}
\item[($a$)] for every $r>\inf_X\Phi$ and every $\lambda \in]0,\frac{1}{\varphi(r)}[$, the restriction of the functional
\[ I_\lambda:=\Phi-\lambda \Psi \]
to $\Phi^{-1}(]-\infty,r[)$ admits a global minimum, which is a critical point (local minimum) of $I_\lambda$ in $X$.
\item[($b$)] if $\gamma<+\infty$, then for each $\lambda \in]0,\frac{1}{\gamma}[$, the following alternative holds either,
\begin{itemize}
\item[($b_1$)] $I_\lambda$ possesses a global minimum, or
\item[($b_2$)] there is a sequence $\{u_n\}$ of critical points (local minima) of $I_\lambda$ such that
\[ \lim_{n\to+\infty}\Phi(u_n)=+\infty, \]
\end{itemize}
\item[($c$)] if $\delta<+\infty$, then for each $\lambda \in]0,\frac{1}{\delta}[$, the following alternative holds either:
\begin{itemize}
\item[($c_1$)] there is a global minimum of $\Phi$ which is a local minimum of $I_\lambda$. or,
\item[($c_2$)] there is a sequence $\{u_n\}$ of pairwise distinct critical points (local minima) of
$I_\lambda$ which weakly converges to a global minimum of $\Phi$, with
\[ \lim_{n\to+\infty}\Phi(u_n)=\inf_{u\in X}\Phi(u). \]
\end{itemize}
\end{itemize}
\end{theorem}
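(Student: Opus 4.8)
The plan is to read Theorem~\ref{theo2.1} as a self-contained abstract variational principle: I would prove part $(a)$ by the direct method of the calculus of variations, and then obtain parts $(b)$ and $(c)$ from $(a)$ by applying it along level parameters $r_n$ tending to $+\infty$, respectively to $(\inf_X\Phi)^+$, chosen so that $\varphi(r_n)$ realizes $\gamma$, respectively $\delta$.

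For part $(a)$, fix $r>\inf_X\Phi$ and $\lambda\in \,]0,1/\varphi(r)[$ (reading $1/\varphi(r)=+\infty$ when $\varphi(r)\le 0$). First I would record two structural facts: (i) every sublevel set $\Phi^{-1}(]-\infty,\rho])$ is sequentially weakly compact, being bounded by coercivity of $\Phi$, sequentially weakly closed by weak lower semicontinuity of $\Phi$, and $X$ being reflexive; (ii) $I_\lambda=\Phi-\lambda\Psi$ is sequentially weakly lower semicontinuous, since $\Phi$ is and, with $\lambda>0$, $-\lambda\Psi$ inherits lower semicontinuity from the upper semicontinuity of $\Psi$. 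Hence $I_\lambda$ attains its infimum over $\Phi^{-1}(]-\infty,r])$ at some $u_0$. The decisive step, which I expect to be the \emph{main obstacle}, is to show $\Phi(u_0)<r$, i.e. that no minimizer lies on the top level $\{\Phi=r\}$. This is exactly where $\lambda\varphi(r)<1$ enters: from the definition of $\varphi(r)$ one produces $w$ with $\Phi(w)<r$ and $\lambda\bigl(\sup_{\Phi(v)<r}\Psi(v)-\Psi(w)\bigr)<r-\Phi(w)$, which rearranges to $I_\lambda(w)<r-\lambda\sup_{\Phi(v)<r}\Psi(v)$; combining this strict inequality with the semicontinuity of $\Psi$ along a minimizing sequence converging weakly to $u_0$ (so that $\Psi(u_0)$ cannot exceed the supremum above) contradicts $\Phi(u_0)=r$ and forces $\Phi(u_0)<r$. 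Since $\Phi$ is strongly continuous, $\Phi^{-1}(]-\infty,r[)$ is open in the norm topology, so $u_0$ is a genuine local minimum of $I_\lambda$ on $X$; being a local minimum of a G\^ateaux differentiable functional, $u_0$ is a critical point.

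For part $(b)$, assume $\gamma<+\infty$ and fix $\lambda\in \,]0,1/\gamma[$. Choose $r_n\to+\infty$ with $\varphi(r_n)\to\gamma$, so that $\lambda<1/\varphi(r_n)$ for all large $n$; part $(a)$ then supplies, for each such $n$, a critical point $u_n$ of $I_\lambda$ that is the global minimum of $I_\lambda$ over $\Phi^{-1}(]-\infty,r_n[)$, with $\Phi(u_n)<r_n$. Now dichotomize on $\sup_n\Phi(u_n)$. If it is finite, all $u_n$ lie in one bounded sublevel set; passing to a weakly convergent subsequence $u_n\rightharpoonup u^\ast$ and using that $I_\lambda(u_n)=\inf_{\Phi(u)<r_n}I_\lambda(u)$ decreases to $\inf_X I_\lambda$ together with weak lower semicontinuity of $I_\lambda$, one gets $I_\lambda(u^\ast)=\inf_X I_\lambda$, which is alternative $(b_1)$. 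Otherwise $\Phi(u_n)$ is unbounded, and after relabeling $\Phi(u_n)\to+\infty$, which is alternative $(b_2)$.

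For part $(c)$, assume $\delta<+\infty$, fix $\lambda\in \,]0,1/\delta[$, choose $r_n\to(\inf_X\Phi)^+$ with $\varphi(r_n)\to\delta$, and apply $(a)$ to obtain critical points $u_n$ of $I_\lambda$ with $\inf_X\Phi\le\Phi(u_n)<r_n$, hence $\Phi(u_n)\to\inf_X\Phi$. By coercivity $\{u_n\}$ is bounded, and any weak cluster point $u^\ast$ satisfies $\Phi(u^\ast)\le\liminf_n\Phi(u_n)=\inf_X\Phi$, so $u^\ast$ is a global minimum of $\Phi$. If $\{u_n\}$ assumes only finitely many values, one value is a global minimum of $\Phi$ that is simultaneously a local minimum of $I_\lambda$, giving alternative $(c_1)$; otherwise, discarding repetitions yields a sequence of pairwise distinct critical points of $I_\lambda$ which weakly converges to a global minimum of $\Phi$ with $\Phi(u_n)\to\inf_X\Phi$, giving alternative $(c_2)$. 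As indicated, the only non-routine point is the interior-minimizer argument in $(a)$ (carefully matching the definition of $\varphi(r)$ against the semicontinuity hypotheses); once that is settled, parts $(b)$ and $(c)$ amount to bookkeeping with subsequences.
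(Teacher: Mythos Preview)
The paper does not prove Theorem~\ref{theo2.1} at all: it is quoted verbatim from Bonanno--Molica Bisci \cite[Theorem~2.1]{bonanno} and then used as a black box in the proofs of Theorems~\ref{theo1} and~\ref{3.1}. So there is no ``paper's own proof'' to compare against; your proposal supplies an argument where the paper simply cites a reference.

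Your overall strategy is the standard one and matches the original argument in \cite{bonanno} (itself built on Ricceri's variational principle): establish $(a)$ by the direct method together with an interior-minimizer argument driven by $\lambda\varphi(r)<1$, and then iterate along levels $r_n\to+\infty$, respectively $r_n\to(\inf_X\Phi)^+$, to obtain $(b)$ and $(c)$. One small slip in your write-up of $(a)$ is worth flagging: you justify $\Phi(u_0)<r$ by saying that weak upper semicontinuity of $\Psi$ along the minimizing sequence forces $\Psi(u_0)\le\sup_{\Phi(v)<r}\Psi(v)$. Upper semicontinuity runs the other way ($\Psi(u_0)\ge\limsup_n\Psi(u_n)$), so that sentence as written does not close the argument. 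The clean fix is to route through $\Phi$ instead: take a minimizing sequence $(u_n)$ for $I_\lambda$ over $\{\Phi<r\}$; since $\Psi(u_n)\le S:=\sup_{\Phi(v)<r}\Psi(v)$ one has
\[
\Phi(u_n)=I_\lambda(u_n)+\lambda\Psi(u_n)\le I_\lambda(u_n)+\lambda S,
\]
and because $I_\lambda(u_n)\to m\le I_\lambda(w)<r-\lambda S$ this gives $\limsup_n\Phi(u_n)<r$, whence $\Phi(u_0)\le\liminf_n\Phi(u_n)<r$ by weak lower semicontinuity of $\Phi$. With that amendment, your deductions of $(b)$ and $(c)$ go through as described.
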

\section{Multiple solutions}
In this part in order to present our main results we consider some assumptions as follows.
For fixed $D>0$, let
\[m(D):=meas(B(0,D))=D^N\frac{\pi^{\frac{N}{2}}}{\Gamma(1+\frac{N}{2})},\]
 where
$\Gamma$ is the Gamma function defined by $\Gamma(t):=\int_0^{+\infty}z^{t-1}e^{-z}dz$ for all $t>0$.  Moreover,
\begin{equation}\label{k}
 k:=\dfrac{1}{m_1 m(D){c^p}\left(\frac{\sigma(N,p)}{D^p}+g(p,N)\right)}\geq 0,
\end{equation}
where $\sigma(N,p):=2^{p-N}(2^N-1)$, $c= \frac{2p}{2-N}$, $m_1,m_0$ are upper and lower bounds for $M(t)$ in \eqref{eq1.1} and
\[g(p,N):=\frac{1+2^{N+p}N\int_{\frac{1}{2}}^1
t^{N-1}(1-t)^pdt}{2^N}.\]
Let $\|\cdot\|_1$ denotes the norm of $L^1(\Omega)$, we present our first result as the following theorem.
\begin{theorem}\label{theo1}
Let $f:\mathbb{R}^N\times\mathbb{R}\rightarrow\mathbb{R}$ be an
$L^1$- Carath\'{e}odory function, $f$ is radially symmetric with respect to first component, $F(x,t)\geq0$ for every
$(x,t)\in\mathbb{R^N}\times\mathbb{R^+}$ and $A<km_0 B$, where
$A:=\liminf_{\xi\rightarrow \infty}\frac{\|\ell_\xi\|_1}{\xi^{p-1}}$, $B:=\limsup_{\xi\rightarrow \infty}
\frac{\int_{B(0,\frac{D}{2})}F(x,\xi)dx}{\xi^p}$, $\ell_\xi\in L^1(\mathbb{R^N})$  and $k$ are given
by \eqref{defn1} and \eqref{k}, respectively. Then
for every
$$\lambda\in \Lambda:=\left]\frac{m_1 m(D)}{B}\left(\frac{\sigma(N,p)}{pD^p}+\frac{g(p,N)}{p}\right),\frac{m_0}{p c^p A}\right[,$$
there exists an unbounded sequence of radially symmetric weak solutions for \eqref{eq1.1} in $X$.
\end{theorem}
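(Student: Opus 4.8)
The plan is to apply Theorem~\ref{theo2.1}, part~$(b)$, to the functionals $\Phi$ and $\Psi$ on the reflexive real Banach space $X=W^{1,p}_r(\mathbb{R}^N)$. The structural hypotheses required there — $\Phi$ coercive, strongly continuous and sequentially weakly lower semicontinuous, $\Psi$ sequentially weakly upper semicontinuous and finite — follow from $m_0\le M\le m_1$ together with the properties of $\|\cdot\|_r$, and, crucially, from the \emph{compact} embedding $W^{1,p}_r(\mathbb{R}^N)\hookrightarrow L^\infty(\mathbb{R}^N)$ of Theorem~\ref{thcom}, which together with $(A_3)$ lets one pass to the limit inside the integral defining $\Psi$. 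Once the theorem applies, I will show that for $\lambda\in\Lambda$ the alternative $(b_1)$ cannot hold, so that $(b_2)$ yields a sequence $\{u_n\}\subset X$ of local minima of $I_\lambda$ with $\Phi(u_n)\to+\infty$. Since $\Phi(u)\le\frac{m_1}{p}\|u\|_r^p$, this forces $\|u_n\|_r\to+\infty$, i.e.\ the sequence is unbounded in $X$. Finally, because $f$ is radially symmetric in its first variable, $F$ — and hence $\Phi,\Psi,I_\lambda$ regarded on all of $W^{1,p}(\mathbb{R}^N)$ — are $O(N)$-invariant with $Fix(O(N))=X$, so Theorem~\ref{palais} upgrades each $u_n$ to a critical point of $I_\lambda$ on $W^{1,p}(\mathbb{R}^N)$, that is, to a radially symmetric weak solution of \eqref{eq1.1}.

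To control $\gamma=\liminf_{r\to+\infty}\varphi(r)$, fix $r>0$ and use $u=0$ (for which $\Phi(0)=\Psi(0)=0$) as a competitor in the infimum defining $\varphi(r)$, obtaining $\varphi(r)\le r^{-1}\sup_{\Phi(v)<r}\Psi(v)$. If $\Phi(v)<r$, then $\frac{m_0}{p}\|v\|_r^p\le\frac1p\widehat{M}(\|v\|_r^p)<r$, whence $\|v\|_\infty\le c\|v\|_r< c(pr/m_0)^{1/p}=:\delta(r)$; and from $|F(x,t)|\le|t|\,\ell_{|t|}(x)$ (a consequence of $(A_3)$, with $h\mapsto\ell_h$ chosen nondecreasing) we get $\Psi(v)\le\|v\|_\infty\|\ell_{\|v\|_\infty}\|_1\le\delta(r)\|\ell_{\delta(r)}\|_1$. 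Since $r=\frac{m_0}{pc^p}\delta(r)^p$, this gives $\varphi(r)\le\frac{pc^p}{m_0}\cdot\frac{\|\ell_{\delta(r)}\|_1}{\delta(r)^{p-1}}$. Picking $r_n\to+\infty$ so that $\delta(r_n)$ runs along a sequence realizing $A=\liminf_{\xi\to\infty}\|\ell_\xi\|_1/\xi^{p-1}$, we conclude $\gamma\le\frac{pc^pA}{m_0}<+\infty$; hence $\Lambda\subseteq\,]0,\tfrac{m_0}{pc^pA}[\,\subseteq\,]0,\tfrac1\gamma[$, so Theorem~\ref{theo2.1}$(b)$ is available for every $\lambda\in\Lambda$.

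It remains to exclude $(b_1)$ by exhibiting test functions along which $I_\lambda\to-\infty$. For $\xi>0$ put
\[
w_\xi(x):=\begin{cases}\xi,& |x|\le D/2,\\[2pt]\dfrac{2\xi}{D}\,(D-|x|),& D/2<|x|\le D,\\[2pt]0,& |x|>D,\end{cases}
\]
which belongs to $X$. A direct computation — using $m(D/2)=2^{-N}m(D)$ in the $|\nabla w_\xi|^p$ term and the substitution $\rho=Dt$ (which produces the integral $\int_{1/2}^1 t^{N-1}(1-t)^p\,dt$) in the $|w_\xi|^p$ term — gives precisely $\|w_\xi\|_r^p=m(D)\,\xi^p\bigl(\tfrac{\sigma(N,p)}{D^p}+g(p,N)\bigr)$, which is exactly what fixes the constants $\sigma(N,p)$ and $g(p,N)$ in the statement. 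Consequently $\Phi(w_\xi)\le\frac{m_1}{p}\|w_\xi\|_r^p$, while $F\ge0$ on $\mathbb{R}^N\times\mathbb{R}^+$ and $w_\xi\ge0$ give $\Psi(w_\xi)\ge\int_{B(0,D/2)}F(x,\xi)\,dx$. Dividing by $\xi^p$,
\[
\frac{I_\lambda(w_\xi)}{\xi^p}\ \le\ \frac{m_1 m(D)}{p}\Bigl(\tfrac{\sigma(N,p)}{D^p}+g(p,N)\Bigr)\ -\ \lambda\,\frac{\int_{B(0,D/2)}F(x,\xi)\,dx}{\xi^p},
\]
and taking $\limsup$ along a sequence $\xi_n\to+\infty$ realizing $B$ (the case $B=+\infty$ being immediate) gives, thanks to $\lambda>\frac{m_1 m(D)}{B}\bigl(\tfrac{\sigma(N,p)}{pD^p}+\tfrac{g(p,N)}{p}\bigr)$ for $\lambda\in\Lambda$, a strictly negative value. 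Hence $I_\lambda(w_{\xi_n})\to-\infty$, so $I_\lambda$ is not bounded below and cannot possess a global minimum. Therefore $(b_2)$ holds, and the proof concludes as described in the first paragraph.

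The routine parts are the verification of the abstract hypotheses for $\Phi,\Psi$ and the Palais invariance argument; the real work — and the place to be careful — is the pair of asymptotic estimates, where the constants must line up: one must show $\gamma\le pc^pA/m_0$ (so that the right endpoint $m_0/(pc^pA)$ of $\Lambda$ sits below $1/\gamma$) and compute $\|w_\xi\|_r^p$ sharply enough that the threshold for $I_\lambda$ being unbounded below is exactly the left endpoint of $\Lambda$. The degenerate cases $A=0$ (right endpoint $+\infty$) and $B=+\infty$ (left endpoint $0$), where $\Lambda$ becomes a half-line or all of $]0,+\infty[$, should also be checked, but the same estimates cover them.
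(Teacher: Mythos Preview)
Your proof is correct and follows essentially the same route as the paper: apply Theorem~\ref{theo2.1}$(b)$ on $X=W_r^{1,p}(\mathbb{R}^N)$, bound $\gamma$ from above via the competitor $u=0$ and the $L^1$-Carath\'eodory estimate, then rule out alternative $(b_1)$ with the same radial cutoff test functions $w_\xi$ (computing $\|w_\xi\|_r^p$ exactly) and finish with Palais. The only cosmetic difference is that the paper bounds $\Psi(v)$ directly by $t_n\|\ell_{t_n}\|_1$ using $|F(x,v(x))|\le |v(x)|\,\ell_{t_n}(x)$ whenever $\|v\|_\infty<t_n$, which avoids your intermediate step $\|v\|_\infty\|\ell_{\|v\|_\infty}\|_1$ and the accompanying monotonicity assumption on $h\mapsto\ell_h$.
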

\begin{proof}
For fixed $\lambda \in \Lambda$, we consider $\Phi$, $\Psi$ and $I_\lambda$ as in
Section 2. Knowing that $\Phi$ and
$\Psi$ satisfy the regularity assumptions in Theorem \eqref{theo2.1}.  In order to study
the critical points of $I_\lambda$ in $X$, we show that $\lambda <\frac{1}{\gamma}<+\infty$, where $\gamma=\underset{r\to +\infty}{\liminf}\phi(r)$.

Let $\lbrace t_n \rbrace $ be a sequence of positive numbers such that $\lim_{n\to \infty} t_n=+\infty $ and
\[A=\lim_{n\rightarrow +\infty}\frac{\|\ell_{t_n}\|_1}{{t_n}^{p-1}}.\]
Set $r_n:=\dfrac{m_0{t_n}^p}{pc^p}$, for all $n \in \mr{N}$. Considering Theorem \ref{thcom} (by relation \eqref{embedding}), one has
\begin{equation}\label{phii}
\begin{aligned}
\Phi^{-1}(]-\infty,r_n[)&=\{v\in X;\, \Phi(v)<r_n\}\\
& = \{v\in X;\ \|v\|_r<(\frac{pr_n}{m_0})^{\frac{1}{p}}\}\\
& \subset \{v\in X;\ \|v\|_{\infty}<t_n\}.
\end{aligned}
\end{equation}
Since $\Phi(0)=\Psi(0)=0$, we have
\[
\begin{aligned}
\varphi(r_n)&=\inf_{\Phi(u)<r_n}\frac{(\sup_{\Phi(v)<r_n}\Psi(v))-\Psi(u)}{r_n-\Phi(u)}\\
& \leq \frac{\sup_{\|v\|_{\infty}<t_n}\int_{\mathbb{R}^N} F(x,v(x))dx}{\dfrac{m_0{t_n}^p}{pc^p}}\\
& \leq \frac{pc^p}{m_0}\frac{\|\ell_{t_n}\|_1}{{t_n}^{p-1}}.
\end{aligned}
\]
Hence, it follows that
\[
\gamma= \liminf_{n\to +\infty}\varphi(r_n)\leq \frac{pc^p}{m_0}\liminf_{n\to+\infty}\frac{\|\ell_{t_n}\|_1}{t_n^{p-1}}=\frac{pc^p}{m_0}A<+\infty.
\]
Also, from $\lambda \in \Lambda$, one has
$\dfrac{1}{\lambda} >\frac{pc^p}{m_0}A,$ and we get $\lambda <\dfrac{1}{\gamma}$.\\
Now, we show that $I_\lambda$ is unbounded from below. Let $\lbrace d_n\rbrace$ be a sequence of positive numbers such that $\lim_{n \rightarrow +\infty}d_n=+\infty$ and
\begin{equation}\label{B}
B=\lim_{n\rightarrow +\infty}
\frac{\int_{B(0,\frac{D}{2})}F(x,d_n)dx}{{d_n}^p}.
\end{equation}
Assume $\{v_n\} \subset X$ defined by
\begin{equation*}
 v_n(x):=\left\{\begin{array}{lll}
\displaystyle0  & \quad \mathbb{R^N}\setminus B(0,D)\\
\displaystyle d_n  &  \quad B(0,\frac{D}{2})
\\
\displaystyle \frac{2d_n}{D}(D-|x|)  & \quad B(0,D)\setminus
B(0,\frac{D}{2}),
\end{array}\right.
\end{equation*}
for every $n \in \mr{N}$. By a similar argument and compuations in \cite[P.1017]{candito} one can show that
$$\|v_n\|_r^p=d_n^pm(D)\left(\frac{\sigma(N,p)}{D^P}+g(p,N)\right).$$
Condition $(i)$, implies
$$\int_{\mathbb{R^N}}F(x,v_n(x))dx\geq\int_{B(0,\frac{D}{2})}F(x,d_n)dx,$$
for every $n \in N$. Therefore
\[\Phi(v_n)\leq \frac{m_1}{p}\|v_n \|_r^p=\frac{m_1 }{p}m(D) d_n^p\left(\frac{\sigma(N,p)}{D^p}+g(p,N)\right).\]
Let
\[\alpha:=m_1m(D)\left(\frac{\sigma(N,p)}{pD^P}+\dfrac{g(p,N)}{p}\right). \]
If $B <+\infty $,  for every $\varepsilon \in (\dfrac{\alpha}{\lambda B},1)$  and
$\varepsilon':=(1-\varepsilon)B >0$, \eqref{B} implies that there exists $N_\varepsilon$ such that for all $n>N_\varepsilon$,
\[
(\varepsilon-1)B <\frac{1}{d_n^p}\int_{B(0,\frac{D}{2})}F(x,d_n)dx - B<(1-\varepsilon) B.
\]
So
\[
\int_{B(0,\frac{D}{2})}F(x,d_n)dx>\varepsilon Bd_n^p,\qquad \text{for all}\ n>N_\varepsilon.
\]
Therefore for every $n> N_\varepsilon$,
\[
\begin{aligned}
I_\lambda(v_n)&\leq m_1d_n^pm(D)\left(\frac{\sigma(N,p)}{pD^p}+\dfrac{g(p,N)}{p}\right)-\lambda\varepsilon Bd_n^p\\
&=d_n^p(\alpha -\lambda \varepsilon B).
\end{aligned}
\]
Hence, $\lim_{n\rightarrow\infty}I_\lambda(v_n)=-\infty.$
\\If $B=+\infty$, let $L>\dfrac{\alpha}{\lambda}$. From \eqref{B} there exists $N_L$ such that for all $n>N_L$,
\[\int_{B(0,\frac{D}{2})}F(x,d_n)dx>Ld_n^p.\]
Thus
\[I_\lambda (v_n)\leq d_n^p\alpha-\lambda Ld_n^p=d_n^p(\alpha-\lambda L), \]
for every $n>N_L$, which shows that
\[\lim_{n\rightarrow\infty}I_\lambda(v_n)=-\infty.\]
Now, Theorem \ref{theo2.1} ($b$) implies, the functional $I_\lambda$ admits an unbounded sequence $\{u_n\}\subset X$ of
 critical points. Considering Theorem \ref{palais}, these critical points are also critical points for the smooth and $O(N)$-invariant functional $I_\lambda:W^{1,p}\left(\mathbb{R}^N\right)\to \mathbb{R}$. Therefore, there is a sequence of radially semmetric weak soluitions for the problem \eqref{eq1.1}, which are unbounded in $W^{1,p}\left(\mathbb{R}^N\right)$.
\end{proof}
Here we prove our second result which says that under different conditions the problem \eqref{eq1.1} has a sequence of
weak solutions, which converges weakly to zero.
\begin{theorem}\label{3.1}
Let $f:\mathbb{R}^N\times\mathbb{R}\rightarrow\mathbb{R}$ be an
$L^1$- Carath\'{e}odory function, $f$ is radially symmetric with respect to first component, $F(x,t)\geq0$ for every
$(x,t)\in\mathbb{R^N}\times\mathbb{R^+}$ and $A'<km_0 B'$
where
$ A':=\liminf_{\xi\rightarrow 0^+}\frac{\|\ell_\xi\|_1}{\xi^{p-1}}$, $B':=\limsup_{\xi\rightarrow 0^+}
\frac{\int_{B(0,\frac{D}{2})}F(x,\xi)dx}{\xi^p}$,
$\ell_\xi\in L^1(\mathbb{R^N})$  and $k$ are given by \eqref{defn1} and  \eqref{k}, respectively. Then
for every
$$\lambda\in \Lambda':=\left]\frac{m_1m(D)}{B'}\left(\frac{\sigma(N,p)}{pD^p}+\frac{g(p,N)}{p}\right),\frac{m_0}{p c^p A'}\right[,$$
there exists a sequence of weak solutions for \eqref{eq1.1} which converges weakly to zero in $X$.
\end{theorem}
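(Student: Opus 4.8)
The plan is to run the scheme of the proof of Theorem \ref{theo1} again, but with two changes: the sequence $r_n$ is now sent to $\inf_X\Phi$ instead of to $+\infty$, so that we invoke part $(c)$ of Theorem \ref{theo2.1} in place of part $(b)$. First I would record that $\inf_X\Phi=\Phi(0)=0$ and that $0$ is the \emph{unique} global minimum of $\Phi$: since $\widehat{M}(t)\ge m_0 t$ for $t\ge 0$ with $\widehat{M}(0)=0$, we have $\Phi(u)=\tfrac1p\widehat{M}(\|u\|_r^p)\ge \tfrac{m_0}{p}\|u\|_r^p\ge 0$, with equality only at $u=0$. This makes the phrase ``global minimum of $\Phi$'' in Theorem \ref{theo2.1}$(c)$ unambiguous, so that alternative $(c_2)$ will give $u_n\rightharpoonup 0$.

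Next, to bound $\delta:=\liminf_{r\to 0^+}\varphi(r)$, I would choose a sequence $t_n\to 0^+$ realizing $A'=\lim_n \|\ell_{t_n}\|_1/t_n^{p-1}$ and set $r_n:=\tfrac{m_0 t_n^p}{pc^p}\to 0^+$. Exactly as in \eqref{phii}, Theorem \ref{thcom} (through \eqref{embedding}) yields $\Phi^{-1}(]-\infty,r_n[)\subset\{v\in X:\ \|v\|_\infty<t_n\}$, and since $\Phi(0)=\Psi(0)=0$,
\[
\varphi(r_n)\le \frac{\sup_{\|v\|_\infty<t_n}\int_{\mathbb{R}^N}F(x,v(x))\,dx}{r_n}\le \frac{pc^p}{m_0}\,\frac{\|\ell_{t_n}\|_1}{t_n^{p-1}},
\]
so $\delta\le \tfrac{pc^p}{m_0}A'<+\infty$. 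Because $\lambda\in\Lambda'$ forces $1/\lambda>\tfrac{pc^p}{m_0}A'$, we get $\lambda<1/\delta$, and Theorem \ref{theo2.1}$(c)$ applies.

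The crucial step is to exclude alternative $(c_1)$, i.e.\ to show $0$ is \emph{not} a local minimum of $I_\lambda$. For this I would reuse the truncated radial bumps $v_n$ of the proof of Theorem \ref{theo1}, now built on a sequence $d_n\to 0^+$ with $B'=\lim_n d_n^{-p}\int_{B(0,\frac D2)}F(x,d_n)\,dx$. As before $\|v_n\|_r^p=d_n^p m(D)\big(\tfrac{\sigma(N,p)}{D^p}+g(p,N)\big)$, hence $\Phi(v_n)\le \tfrac{m_1}{p}\|v_n\|_r^p=\alpha d_n^p$ with $\alpha:=m_1 m(D)\big(\tfrac{\sigma(N,p)}{pD^p}+\tfrac{g(p,N)}{p}\big)$, while $F\ge 0$ on $\mathbb{R}^N\times\mathbb{R}^+$ gives $\Psi(v_n)\ge \int_{B(0,\frac D2)}F(x,d_n)\,dx$, so $I_\lambda(v_n)\le \alpha d_n^p-\lambda\int_{B(0,\frac D2)}F(x,d_n)\,dx$. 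Since the left endpoint of $\Lambda'$ equals $\alpha/B'$, we have $\lambda B'>\alpha$; splitting into the cases $B'<+\infty$ (take $\varepsilon\in(\alpha/(\lambda B'),1)$) and $B'=+\infty$ (take $L>\alpha/\lambda$) exactly as in Theorem \ref{theo1}, one gets for all large $n$ that $I_\lambda(v_n)\le d_n^p(\alpha-\lambda\varepsilon B')<0$ (resp. $\le d_n^p(\alpha-\lambda L)<0$). As also $\|v_n\|_r\to 0$, the $v_n\to 0$ strongly in $X$ while $I_\lambda(v_n)<0=I_\lambda(0)$, so $0$ is not a local minimum of $I_\lambda$ and $(c_1)$ fails. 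Therefore Theorem \ref{theo2.1}$(c_2)$ provides a sequence $\{u_n\}\subset X$ of pairwise distinct critical points of $I_\lambda$ with $u_n\rightharpoonup 0$ and $\Phi(u_n)\to 0$; finally, $I_\lambda$ being $C^1$ and $O(N)$-invariant on $W^{1,p}(\mathbb{R}^N)$ with $X=\mathrm{Fix}(O(N))$, Theorem \ref{palais} turns each $u_n$ into a radially symmetric weak solution of \eqref{eq1.1}, and since $X$ is a closed subspace of $W^{1,p}(\mathbb{R}^N)$ the convergence $u_n\rightharpoonup 0$ persists there. The main obstacle is precisely the exclusion of $(c_1)$: one must ensure the test functions $v_n$ tend to $0$ in norm while keeping $I_\lambda$ strictly negative, which is exactly where the hypothesis $\lambda>\alpha/B'$ (the left endpoint of $\Lambda'$) and the sign condition $F\ge 0$ enter; the estimate $\delta<+\infty$ and the Palais transfer are routine repetitions of the first proof.
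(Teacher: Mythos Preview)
Your proposal is correct and follows essentially the same route as the paper's proof: bound $\delta$ via the sequence $r_n=\tfrac{m_0 t_n^p}{pc^p}\to 0^+$, exclude alternative $(c_1)$ by showing $I_\lambda(v_n)<0=I_\lambda(0)$ along the radial bumps $v_n$ built on $d_n\to 0^+$, and finish with Palais' principle. If anything, your write-up is slightly more careful than the paper's, since you explicitly record that $0$ is the \emph{unique} global minimizer of $\Phi$ and that $\|v_n\|_r\to 0$, both of which are needed to make the local-minimum exclusion and the weak convergence $u_n\rightharpoonup 0$ precise.
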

\begin{proof}
For fixed $\lambda \in \Lambda'$, we consider $\Phi$, $\Psi$ and $I_\lambda$ as in Section 2. Knowing that
$\Phi$ and $\Psi$ satisfy the regularity assumptions in Theorem \eqref{theo2.1}, we show that $\lambda <\frac{1}{\delta}$. We know that $\inf_X\Phi =0$, therefore,
\[\delta:=\liminf_{r\to 0^+}\varphi(r).\]
Let $\lbrace t_n \rbrace $ be a sequence of positive numbers such that $\lim_{n\to \infty} t_n=0 $ and
\[A'=\lim_{n\rightarrow +\infty}\frac{\|\ell_{t_n}\|_1}{{t_n}^{p-1}}\]
Set $r_n:=\dfrac{m_0{t_n}^p}{pc^p}$, for all $n \in \mr{N}$. Considering \eqref{phii}, one has
$\|v\|_{\infty}<t_n$ for every $v\in X$ with $\Phi(v)<r_n$.
Since $\Phi(0)=\Psi(0)=0$, we have
\[
\begin{aligned}
\varphi(r_n)&=\inf_{\Phi(u)<r_n}\frac{(\sup_{\Phi(v)<r_n}\Psi(v))-\Psi(u)}{r_n-\Phi(u)}\\
&\leq \frac{\sup_{\|v\|_{\infty}<t_n}\int_{\mathbb{R}^N} F(x,v(x))dx}{\dfrac{m_0{t_n}^p}{pc^p}}\\
&\leq \frac{pc^p}{m_0}\frac{\|\ell_{t_n}\|_1}{{t_n}^{p-1}}.
\end{aligned}
.\]
Hence, it follows that
\[\delta \leq \liminf_{n\to +\infty}\varphi(r_n)\leq pc^p\liminf_{n\to+\infty}\frac{\|\ell_{t_n}\|_1}{t_n^{p-1}}=pc^pA'<+\infty.\]
Also, from $\lambda \in \Lambda'$, one has
$\dfrac{1}{\lambda} > pc^pA',$ and we get $\lambda <\dfrac{1}{\delta}$.\\
Now, we show that zero is not a local minimum of $I_\lambda. $ Let $\lbrace d_n\rbrace$ be a sequence of
positive numbers such that $\lim_{n \rightarrow +\infty}d_n=0$ and
\begin{equation}\label{BB}
B=\lim_{n\rightarrow +\infty}
\frac{\int_{B(0,\frac{D}{2})}F(x,d_n)dx}{{d_n}^p}.
\end{equation}
Also consider $\{v_n\}\subset X$ similar to the proof of Theorem \ref{theo1}, defined by $\{d_n\}$ above. By the
same argument, we obtain that $I_\lambda(v_n)<0$ for $n$ large enough. Thus zero is not a local minimum of $I_\lambda$ as
 $\lim_{n\to+\infty}I_\lambda(v_n)<I_\lambda(0)=0$. Therefore, there exists a sequence $\{u_n\}\subset X$ of
critical points of $I_\lambda$ which converges weakly to zero in $X$ as  $\lim_{n\to+\infty}\Phi(u_n)=0$.

Again, Considering Theorem \ref{palais}, these critical points are also critical points for the smooth and $O(N)$-invariant functional $I_\lambda:W^{1,p}\left(\mathbb{R}^N\right)\to \mathbb{R}$. Therefore, there is a sequence of radially symmetric weak solutions for the problem \eqref{eq1.1}, which converges weakly to zero in $W^{1,p}\left(\mathbb{R}^N\right)$.
\end{proof}

\section*{Acknowledgements}
We would like to thank the anonymous reviewer for the careful reading of our manuscript and the valuable comment and suggestion.\\
The second author is partially supported by
I.N.D.A.M - G.N.A.M.P.A. 2019 and the ``RUDN University Program 5-100''.\\
This work is done when the third author is visiting University of Manitoba on a sabbatical leave from Imam Khomeini International University.

\end{document}